\title{A Novel Fast Exact Subproblem Solver for Stochastic Quasi-Newton Cubic Regularized Optimization}
\DeclareMathOperator*{\argmin}{\arg\min}   
\newcommand\defeq{\mathrel{\overset{\makebox[0pt]{\mbox{\normalfont\tiny\sffamily def}}}{=}}}
\newcommandx{\unsure}[2][1=]{\todo[linecolor=red,backgroundcolor=red!25,bordercolor=red,#1]{#2}}
\newcommandx{\change}[2][1=]{\todo[linecolor=blue,backgroundcolor=blue!25,bordercolor=blue,#1]{#2}}
\newcommandx{\info}[2][1=]{\todo[linecolor=OliveGreen,backgroundcolor=OliveGreen!25,bordercolor=OliveGreen,#1]{#2}}
\newcommandx{\improvement}[2][1=]{\todo[linecolor=Plum,backgroundcolor=Plum!25,bordercolor=Plum,#1]{#2}}
\newcommandx{\josh}[2][1=]{\todo[linecolor=red,backgroundcolor=white!25,bordercolor=red,#1]{#2}}
\newcommandx{\wenwen}[2][1=]{\todo[linecolor=red,backgroundcolor=white!25,bordercolor=green,#1]{#2}}
\author[1, 2, 3]{\textbf{Jarad Forristal}}
\author[2]{\textbf{Joshua Griffin}}
\author[2]{\textbf{Wenwen Zhou}}
\author[2]{\textbf{Seyedalireza Yektamaram}}
\affil[1]{Department of Computer Science, The University of Texas at Austin}
\affil[2]{SAS Institute}
\affil[3]{\texttt{Jarad.Forristal@sas.com}}
\newtheorem{theorem}{Theorem}
\newtheorem{assumption}{Assumption}
\begin{document}

\maketitle

\begin{abstract}
  In this work we describe an Adaptive Regularization using Cubics (ARC) method 
  for large-scale nonconvex unconstrained optimization using Limited-memory Quasi-Newton (LQN)
  matrices. ARC methods are a relatively new family of optimization strategies
  that utilize a cubic-regularization (CR) term in place of 
  trust-regions and line-searches. LQN methods offer a large-scale
  alternative to using explicit second-order information by taking identical inputs to those used by popular first-order methods such as stochastic gradient descent (SGD). Solving the CR subproblem exactly requires Newton's method, yet using properties of the internal structure of LQN matrices, we are able to find exact solutions to the CR subproblem in a matrix-free manner, providing large speedups and scaling into modern size requirements. Additionally, we expand upon previous ARC work and explicitly incorporate first-order updates into our algorithm. We provide experimental results when the SR1 update is used, which show substantial speed-ups and competitive performance compared to Adam and other second order optimizers on deep neural networks (DNNs). We find that our new approach, ARCLQN, compares to modern optimizers with minimal tuning, a common pain-point for second order methods.
\end{abstract}

\section{Introduction}

Scalable second-order methods for training deep learning problems have shown great potential, yet 
ones that build on Hessian-vector products may be more expensive to use and arguably require more sophisticated automatic differentiation architecture be available
than required by SGD and its main variants~\cite{SGDoverview}. 
In this paper, we focus on algorithms that require similar information to popular methods such as SGD,
namely, stochastic gradients calculated on mini-batches of data. Quasi-Newton (QN) methods are a natural higher-level alternative to first-order methods, in that they seek to learn curvature information dynamically from past steps based on gradient information. Thus they can work out of the box in the same settings as SGD with little model-specific coding required.

Similar to Hessian-based approaches, the more popular alternative Kronecker-factored Approximate
Curvature (K-FAC)~\cite{martens2015optimizingKFAC} requires careful encoding of the inverse Fisher matrix block diagonal approximation that is layer dependent. Thus KFAC approaches, unlike QN methods, are tightly coupled with the model type. Limited-memory variants of QN are critical in that they scale to similar dimensions to where SGD is used. Potentially exploiting the curvature information, they may be able to converge in lower a number of steps compared to first-order alternatives \cite{LSR1}. Additionally, such higher-order methods parallelize similarly to modern first-order methods, as large batch sizes allow lower variance estimation of the Hessian, which may motivate training in a large batch distributed setting \cite{LSR1}. 

Note that stochastic QN (SQN) methods strive (though not always~\cite{wills2021stochastic}) to mimic the learning-rate policies used by 
SGD~\cite{byrd2016stochastic, wang2017stochastic, schraudolph2007stochastic} 
and often skip line-searches that are typically used in non-stochastic 
variants~\cite{gill1972quasi,nocedal2006quasi,shi2006convergence}. 
For this reason, a relatively recent alternative to line-search and trust-region approaches known as cubic regularization~\cite{nesterov2006cubic, ARCP1,tripuraneni2018stochastic} is a promising alternative.  In detail, we study the minimization problem of
\begin{equation}
    \underset{s \in \mathbb{R}^n}{\textrm{minimize }} m_k(s) \defeq f(x_k) + s^Tg_k + \frac{1}{2} s^TB_ks + \frac{1}{3} \sigma_k \vert\vert s \vert\vert^3,\label{eq:cr_subproblem}
\end{equation}
for a given $x_k$, where $g_k \defeq \nabla f(x_k)$, $B_k$ is a Hessian approximation, $\sigma_k$ an iteratively chosen adaptive regularization parameter, and $f(x)$ the objective function we want to minimize. Equation~\ref{eq:cr_subproblem} is also known as the CR subproblem. Cubic regularization is popular because it can be shown that if
$\nabla^2 f$ is Lipschitz continuous with constant $L,$ then
$f(x_k+s) \le m_k(s)$ whenever $\sigma_k \ge L$ and 
$B_ks=\nabla^2 f(x)s$~\cite{nesterov2006cubic}. Thus if the Hessian approximation $B_k$ behaves
like $\nabla^2 f(x)$ along the search direction $s,$ the model function $m_k(s)$ becomes an upper bound
on the objective $f(x+s).$  In such cases a line-search would not be needed as reduction in
$m_k(s)$ translates directly into reduction in $f(x+s)$, removing the risk that the
computational work performed reducing $m_k(s)$ is wasted.
Thus if we can reliably have $\sigma_k \ge L_k$ and find a Hessian approximation 
$B_k$ such that $B_k s \approx \nabla^2 f(x) s,$ the need for line-searches and learning rate
scheduling could be greatly reduced.  

We propose an efficient exact solver to Equation~\ref{eq:cr_subproblem} which is tractable in large-scale
optimization problems under near-identical conditions for which SGD itself is commonly applied. As Newton's method corresponds to much of the computation overhead when solving
Equation~\ref{eq:cr_subproblem}, a dense approach such as that
described in~\citet{ARCP1} would be prohibitive.  A plausible
choice would be to replace the dense Cholesky factorizations of~\citet{ARCP1} with corresponding
LQN inversions strategies such as that 
described in~\citet{erway2015efficiently}. A challenge in this approach is that an
application of Newton's method can sometimes take a large number 
of inner iterations, which again induces a prohibitive amount of
computational overhead when compared to classic LQN approaches that only require
a single inversion per outer iteration. 
However, by exploiting properties of LQN methods described in~\citet{erway2015efficiently} 
and~\citet{burdakov2017efficiently}, we can instead perform Newton's method in a reduced
subspace such that the cost per Newton iteration is reduced from ($\mathcal{O}(mn) \times$ number of iterations) to ($\mathcal{O}(m) \times$ number of iterations),
where $m$ is commonly chosen to be an integer between 5 and 20. The full-space solution
to Equation~\ref{eq:cr_subproblem} can then be recovered for a cost identical to that of classic LQM methods.

To the best of our knowledge, all previous attempts to use LQN methods in the context of 
cubic-regularization have necessarily had to change the definition of $m_k(s)$ in order to
find an approximate solution. Remarkably, we present a mechanism for minimizing $m_k(s)$
using similar computational efforts to a single matrix inversion of a shifted
LQN matrix (required at a minimum by all such approaches). Further, we show that
by applying Newton's method in the reduced subspace, we can achieve speed improvements 
of more than 100x over a naive (LQN inversion-based) implementation.  
In the numerical results section we further show that this modification permits the
application of LQN matrices with exact cubic regularization as an practical
alternative step-generator for large scale
DNN optimization problems.

\section{Related Work}

Second-order methods for DNNs are steadily growing more common \cite{KBFGS, LSR1, TRLSR1, uSdLBFGS, JustSample, APOLLO, AdaHessian}. There are a number of ways to approach this problem; a popular approach is using the ARC framework (or some variant). We chose to use a cubic overestimation of the true objective function as a means of regularization. \citet{ARCP1} provides a principled framework for exact solutions to Equation~\ref{eq:cr_subproblem}. We expand upon their work using limited-memory techniques and apply it to modern tasks for which traditional dense QN methods are infeasible. While
recent proposals suggesting the use of LQN matrices in the context of ARC
algorithms exist, to date, to the best of our knowledge, all have modified the definition of $m_k(s)$ in order to make
step generation tractable (with respect to solving Equation~\ref{eq:cr_subproblem})~\cite{ADACN, MEMLESSSR1, ARCsLSR1}.


LSR1 updates are studied in the context of ARC methodology
using a ``memory-less" variant in~\citet{MEMLESSSR1}. As we will describe in
Section~\ref{section:ARC}, all QN methods iteratively update $B_k$ matrices with pairs $(s_k,y_k)$
such that $B_k s_k= y_k,$ where $y_k$ denotes the difference of the corresponding gradients
of the objective functions.  In~\citet{MEMLESSSR1}, $y_k$ is 
a difference of the gradients of $m_k(s).$  There is no proof of why this should improve
the desired convergence property that $B_k \to \nabla^2 f(x_k)$; instead they show that their
new search direction is a descent direction which is used to prove first-order convergence of their
approach.  
 
\citet{ADACN} approximately minimize $m_k(s)$ by simply solving shifted systems of form 
\[(B_k + \sigma \|s_{k-1}\| I) s_{k} = -g,
\]
where the norm of the previous step is used as an estimate for the norm of $\|s_k\|$ to define
the optimal shift.  As described in Theorem~\ref{thm:firstorderCR}
in Section~\ref{section:ARC},
the optimal solution necessarily satisfies a condition of the form $(B_k + \sigma \|s_{k}\| I) s_{k} = -g.$
Since the norm of $\|s_k\|$ can vary significantly each iteration, their approximate solution is unlikely to be close to
the optimal solution of $m_k(s).$ They further simplify the sub-problem using only the diagonals of $B_k + \sigma \|s_{k-1}\| I$
when generating $s_k.$ 

\citet{ARCsLSR1} solves a modified version of the problem using a shape-changing norm as the cubic overestimation that provides an analytical solution to Equation~\ref{eq:cr_subproblem}. They transform
$m_k(s)$ using similar strategies to those advocated in this paper.  By relaxing the definition
of the norm they are able to analytically solve their new problem.  Unfortunately,
this norm definition is dependent on the matrix $B_k$ and thus makes the definition of
the target Lipschitz constant, $L$, dependent as well.  A nontrivial
distinction in our approaches is they require a QR factorization of matrices of size $n\times m.$
This may be prohibitive for deep learning problems, which may have billions of parameters.  We
take a similar approach to that used by~\citet{burdakov2017efficiently} for trust-region methods
and perform such operations on matrices whose row and column dimensions are both $\mathcal{O}(m).$
Unlike the approach advocated in~\citet{burdakov2017efficiently}, we avoid inversions of potentially
ill-conditioned systems to simultaneously improve the stability of the approach and reducing computation
overhead costs at the same time.   

In~\citet{SANC}, the ARC framework with stochastic gradients is used with a Hessian-based approach
first advocated by~\citet{martens2010deep}. In this case, $\nabla^2 f(x)$ is approximated within
a Krylov-based subspace using Hessian-vector projects with batched estimates of $\nabla^2 f(x).$
They then minimize $m_k(s)$ with this small-dimensional subspace.  We note that a downside of Hessian-based approaches compared to LQN methods is the requirement of the existence of analytic second-order vector multiplies which may or may not be optimized for a given problem.

 An alternative to ARC methods is the use
 of trust-regions and line-searches.  Line-searches are applicable whenever the QN update strategy
 can guarantee $B_k$ is positive definite (such as BFGS), while trust-regions handle
 indefinite matrices but have a similar subproblem complexity to that of ARC
 and are thus harder to solve. Though fundamentally different approaches, they
 have some parallels; thus we can often borrow
 technology from the trust-region subproblem solver space to adapt to the ARC context.
For example, \citet{TRLSR1} outlines mechanisms for efficiently computing 
$(B_k+\lambda I)^{-1} g$ and implicit eigen-decomposition of $B_k + \lambda I$ when
solving the trust-region subproblem 
\begin{eqnarray*}
    \underset{s \in \mathbb{R}^n}{\textrm{minimize }} && q_k(s) = f(x_k) + s^Tg_k + \frac{1}{2} s^TB_ks \\
    \text{ subject to} && \|s\| \le \delta
\end{eqnarray*} 
while~\citet{burdakov2017efficiently} significantly reduces the complexity and memory 
cost of such algebraic operations while solving the same problem.  We thus
adopt select operations developed therein when applicable to adapt~\citet{ARCP1} to the
LQN context.
 
Note that we further extend~\citet{ARCP1} to the stochastic optimization setting. 
Thus we also share relation to past stochastic
 QN approaches.
 \citet{erway2020trust} use the tools described in~\citet{TRLSR1} to create 
 a stochastic trust-region solver using LSR1 updates. \citet{oLBFGS} generalizes BFGS and LBFGS to the online convex optimization setting. \citet{RES} studies BFGS applied to the stochastic convex case and develops a regularization scheme to prevent the BFGS matrix from becoming singular. \citet{SFO} explores domain-specific modifications to SGD and BFGS for sum-of-functions minimization, where the objective function is composed of the sum of multiple differential subfunctions. \citet{SQN} considers not using simple gradient differencing for the BFGS update, but instead more carefully picking to use as $(s_k, y_k)$ pairs; \citet{JustSample} also explores a similar idea. \citet{SDLFBFGS} tries to prevent ill-conditioning of $B_k$ for BFGS updates, similar to \citet{RES}, but explicitly for the nonconvex case. \citet{AdaQN} presents is an optimizer designed specifically for RNNs that builds off \citet{SQN}. 
 
 We note that though our approach is targeting LQN matrices in particular, it is also
 valid for any approach that incorporates diagonal approximations to the Hessian such as
 \citet{APOLLO}. In fact, this observation is current future work.

\paragraph{Our Contributions. } 
\begin{enumerate}
    \item A fast $\mathcal{O}(mn)$ approach for exactly solving the cubic 
    regularization problem for any limited memory
    quasi-Newton approximation that lends itself to
    an efficient eigen-decomposition such as LBFGS and LSR1,
    \item A hybrid first and second-order stochastic Quasi-Newton ARC framework that is competitive
    with current SOTA optimizers, and,
    \item Empirical results of this optimizer applied to
    real-life nonconvex problems.
\end{enumerate}

\section{Algorithm}\label{section:ARC}
In this section we describe the proposed algorithm.  We will
first describe how to exactly and efficiently solve Equation~\ref{eq:cr_subproblem}
when $B_k$ is defined by a limited memory Quasi-Newton matrix.  We 
will demonstrate that the computational complexity of ARCLQN is similar to
that of classical unregularized limited memory Quasi-Newton solvers.
Later in this section we describe how to solve the nonlinear optimization 
problem (Algorithm~\ref{algo:simple_ARC}) using the aforementioned subproblem solver.
We first provide a brief overview of Quasi-Newton matrices.

Popular 
Quasi-Newton updates such as BFGS, DFP, and SR1, are based on iteratively updating
an initial matrix $B_0=\gamma I$ with rank one or two corrections with pairs $(s_k,y_k)$ such
that the property $B_k s_k = y_k$ is maintained each update~\cite{nocedal2006quasi}. For
example, the popular SR1 update formula is given by the recursive update formula:
\begin{equation}\label{eq:SR1update}
B_{k + 1} = B_k + \frac{(y_k - B_k s_k)(y_k - B_k s_k)^T}{s^T_k(y_k - B_ks_k)},
\end{equation}
where $s_k$ and $y_k$ are defined as
$
y = g_k - g_{k-1}
\text{ and } s = x_k - x_{k-1}$.

At each iteration, the update is well defined if $s_k^T (y_k - B_ks_k) \ne 0$. Thus,
\begin{equation}\label{eq:curvature}
\|s_k^T (y_k - B_ks_k)\| > \epsilon \|s_k \| \|y_k - B_ks_k\|
\end{equation}
is checked with a small number $\epsilon$. If condition~\ref{eq:curvature} is not satisfied, $B_k$ is not updated. This helps ensure that $B_k$ remain bounded. The motivation of Quasi-Newton
matrices comes from the Taylor expansion of the gradient function $g(x)$
of a univariate function $f(x)$:
\[
g(x+s) \approx g(x) + \nabla^2 f(x) s
\]
This implies that $\nabla^2 f(x) s \approx g(x+s) - g(x).$  It thus makes sense to require
that any approximate Hessian $B$ we use satisfy the same equation $Bs=y$ where we
define $y=g(x+s)-g(x).$  A number of remarkable properties have been proven for 
popular updates such as SR1 and BFGS, which prove convergence to the
true Hessian~\cite{nocedal2006quasi}.  SR1 is a popular choice in the nonconvex 
optimization case as unlike BFGS, SR1 is able to learn negative-curvature without
becoming singular~\cite{erway2020trust}. It was also shown by \citet{convSR1} that the SR1 update converges to the true Hessian whenever the search directions remain independent.  In contrast, LBFGS requires that exact line-search be used to achieve the same convergence results. A compact representation of the SR1 update is detailed in \citet{COMPACTSR1} and explored more in \citet{LSR1}. For more details, we urge readers to consider those papers. Thus in the numerical results section we 
will be primarily focused on the SR1 update; however, we stress that the exact subproblem
solver proposed in this section will hold for all QN variants described in~\citet{erway2015efficiently}.

Note that $B_k$ if is explicitly formed, it requires $\mathcal{O}(n^2)$ space; as such, for large-scale problems, limited-memory variants 
are popular.  For such cases only the most recent $m \ll n$ pairs are stored
in $n \times m$ matrices $S_k$ and $Y_k,$
\[
S_k \defeq (s_{k-m+1}, \ldots , s_k) \text{ and } Y_k \defeq (y_{k-m+1}, \ldots , y_k).
\]
In the limited memory case $B_k$ is never explicitly formed and operations $B_k$ are
performed using only $\gamma,$ $S$, and $Y$ using $\mathcal{O}(mn)$ operations.  
How this is done specifically for the cubic-regularized case will become clearer later in this section.
Before proceeding, we will next briefly describe the 
approach used by~\citet{ARCP1} for the case where $B_k$ is dense.
Later we describe how to adapt their dense approach to the limited-memory case.
\subsection{Solving the cubic regularized sub-problem}\label{sec:subsolver}
In this section we are focused on efficiently finding a global
solution to the cubic regularized subproblem given in Equation~\ref{eq:cr_subproblem}
and restated here for convenience:
\begin{equation}
    \underset{s \in \mathbb{R}^n}{\textrm{minimize }} 
    m_k(s) \defeq f(x_k) + s^Tg_k + \frac{1}{2} s^TB_ks + \frac{1}{3} \sigma_k \| s \|^3. \tag{\ref{eq:cr_subproblem}}
\end{equation}
We start by describing a Newton-based approach proven to be convergent
in~\citet{ARCP1}.  Though their approach targets dense matrices $B_k$ where
Cholesky factorizations are viable, we subsequently show in this section how to efficiently
extend this approach to large-scale limited memory QN matrices.

The Newton-based solver for Equation~\ref{eq:cr_subproblem} is based on the following 
theorem:
\begin{theorem}[\cite{ARCP1}]\label{thm:firstorderCR}
Let $B_k(\lambda) \defeq B_k+\lambda I,$ $\lambda_1$ denote the smallest eigenvalue of $B_k,$
and $u_1$ its corresponding eigenvector.
A step $s^\ast_k$ is a global minimizer of $m_k(s)$ if there exists 
a $\lambda^\ast \ge \max(0, -\lambda_1)$ such that: 
\begin{eqnarray}
B_k(\lambda^\ast)s_k^\ast &=&   -g_k,    \label{eq:optssol} \\
\|s^\ast_k\| &=& \dfrac{\lambda^\ast}{\sigma_k} , \label{eq:slambdasigma}
\end{eqnarray}
implying $B_k(\lambda^\ast)$ is positive semidefinite. Further, only if $B_k$ is indefinite, $u_1^T g =0,$ and $\|(B+\lambda_1 I)^\dagger g\| \le -\lambda_1/\sigma_k$, then $\lambda^* = -\lambda_1.$
\end{theorem}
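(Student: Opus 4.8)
The plan is to separate the statement into its sufficiency part---the two stationarity conditions together with $B_k(\lambda^\ast)\succeq 0$ force $s^\ast_k$ to be a global minimizer---and the characterization of the degenerate ``hard case'' flagged by the last sentence. First I would observe that Equations~(\ref{eq:optssol}) and~(\ref{eq:slambdasigma}) are precisely first-order stationarity of $m_k$: writing $\lambda^\ast = \sigma_k\|s^\ast_k\|$, the condition $\nabla m_k(s^\ast_k) = g_k + B_k s^\ast_k + \sigma_k\|s^\ast_k\|s^\ast_k = 0$ rearranges into $B_k(\lambda^\ast)s^\ast_k = -g_k$ with $\|s^\ast_k\| = \lambda^\ast/\sigma_k$. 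The requirement $\lambda^\ast \ge \max(0,-\lambda_1)$ then gives positive semidefiniteness directly, since the smallest eigenvalue of $B_k(\lambda^\ast) = B_k + \lambda^\ast I$ is $\lambda_1 + \lambda^\ast \ge 0$; this is the ``implying'' clause of the theorem.

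The crux is upgrading stationarity to \emph{global} optimality, and the key step I would carry out is an exact identity for the objective gap. Substituting $g_k = -B_k(\lambda^\ast)s^\ast_k$ into $m_k(s) - m_k(s^\ast_k)$, completing the square in the shifted matrix $B_k(\lambda^\ast)$, and using $\lambda^\ast = \sigma_k\|s^\ast_k\|$ to rewrite the purely norm-dependent remainder, I expect the gap to collapse to
\begin{equation*}
m_k(s) - m_k(s^\ast_k) = \tfrac{1}{2}(s - s^\ast_k)^T B_k(\lambda^\ast)(s - s^\ast_k) + \tfrac{\sigma_k}{6}\big(\|s\| - \|s^\ast_k\|\big)^2 \big(2\|s\| + \|s^\ast_k\|\big),
\end{equation*}
where the cubic remainder factors via $2a^3 - 3a^2 b + b^3 = (a-b)^2(2a+b)$ with $a = \|s\|$ and $b = \|s^\ast_k\|$. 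Both terms are nonnegative---the first by $B_k(\lambda^\ast)\succeq 0$, the second because $\sigma_k > 0$ and $2\|s\| + \|s^\ast_k\| \ge 0$---so $m_k(s)\ge m_k(s^\ast_k)$ for all $s$, which is global optimality. This identity is short once discovered, so I would treat it as routine rather than the main difficulty.

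The main obstacle I anticipate is the hard-case clause. I would analyze the secular function $\phi(\lambda) \defeq \|(B_k+\lambda I)^{-1}g_k\| - \lambda/\sigma_k$ for $\lambda > -\lambda_1$. Generically $u_1^T g_k \ne 0$, which forces $\|(B_k+\lambda I)^{-1}g_k\|\to\infty$ as $\lambda\downarrow-\lambda_1$ and hence produces a root $\lambda^\ast > -\lambda_1$ by continuity and monotonicity. The degeneracy occurs exactly when $B_k$ is indefinite and $u_1^T g_k = 0$: the component of $g_k$ along the minimal eigenspace vanishes, the candidate step remains bounded as $\lambda\downarrow-\lambda_1$, and the governing quantity becomes $\|(B_k+\lambda_1 I)^\dagger g_k\|$ through the Moore--Penrose pseudoinverse. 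If $\|(B_k+\lambda_1 I)^\dagger g_k\| \le -\lambda_1/\sigma_k$, no admissible root survives strictly above $-\lambda_1$, forcing $\lambda^\ast = -\lambda_1$ and a minimizer $s^\ast_k = -(B_k+\lambda_1 I)^\dagger g_k + \tau u_1$, where $\tau$ is chosen so that $\|s^\ast_k\| = -\lambda_1/\sigma_k = \lambda^\ast/\sigma_k$. The delicate points here are replacing the inverse by the pseudoinverse, checking that the added $u_1$-component leaves Equation~(\ref{eq:optssol}) intact because $(B_k+\lambda_1 I)u_1 = 0$, and verifying that the norm inequality is precisely the condition guaranteeing a feasible $\tau$ exists; the sufficiency identity from the previous paragraph then certifies global optimality since $B_k(\lambda^\ast) = B_k - \lambda_1 I \succeq 0$.
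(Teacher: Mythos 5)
The paper does not prove this theorem itself --- it is imported verbatim from \citet{ARCP1} --- so the only available comparison is with that reference, and your argument is essentially the same as the one given there: stationarity plus $B_k(\lambda^\ast)\succeq 0$, the exact gap identity $m_k(s)-m_k(s^\ast_k)=\tfrac12(s-s^\ast_k)^TB_k(\lambda^\ast)(s-s^\ast_k)+\tfrac{\sigma_k}{6}(\|s\|-\|s^\ast_k\|)^2(2\|s\|+\|s^\ast_k\|)$ (whose algebra I verified), and the secular-function analysis of the hard case. Your proposal is correct and matches the source's approach, so nothing further is needed.
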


We can then see that for the case where $\lambda^\ast > -\lambda_1$, $s^\ast_k$ is given by the solution to the following equation: 
\begin{equation}
    \phi_1(\lambda) \defeq \frac{1}{\vert \vert s(\lambda) \vert \vert} - \frac{\sigma}{\lambda} = 0\label{eq:optimality}.
\end{equation}
For simplicity we define $s(\lambda) \defeq -(B+\lambda I)^{-1} g$, where the pseudo-inverse is used for the 
case where $\lambda=-\lambda_1.$  Note the authors of \citet{ARCP1} show that when $B_k$ indefinite and $u_1^Tg = 0$, the solution $s^\ast_k$
is given by $s^*_k = s(-\lambda_1) + \alpha u_1$ where $\alpha$ is a solution to the equation
$-\lambda_1 = \sigma \vert \vert s(-\lambda_1) + \alpha u_1 \vert \vert.$  That is, whenever 
Equation~\ref{eq:optimality} fails to have a solution, $s^\ast_k$ is obtained by adding
a multiple of the direction of greatest negative curvature to the min-two norm solution to Equation~\ref{eq:optssol} 
so that Equation~\ref{eq:slambdasigma} is satisfied.
The authors of~\cite{ARCP1} thus apply Newton's method to $\phi_1(\lambda)$ resulting in Algorithm~\ref{algo:subproblem}.
This corresponds to Algorithm (6.1) of \citet{ARCP1}.
\begin{algorithm}
\begin{algorithmic}
    \caption{Newton's method to find $s^*$ and solve $\phi_1(\lambda) = 0$}\label{algo:subproblem}
    \If{$B$ indefinite, $u_1^T g = 0$}
        \If{$\|s(-\lambda_1)\| < \frac{\lambda}{\sigma}$}
            \State  Solve $-\lambda_1 = \sigma \vert \vert s(-\lambda_1) + \alpha u_1 \vert \vert$
            \State  $s^* \gets s(-\lambda_1) + \alpha u_1$
        \Else 
            \State $s^* \gets s(-\lambda_1)$
        \EndIf
    \Else
     \State Let $\lambda > \max(0, -\lambda_1)$.
        \While {$\phi_1(\lambda) \neq 0$}
                \State Solve 
                
                \begin{equation}\label{eq:s_lambda}
                    (B+\lambda I) s = -g.
                \end{equation}
                \State Let $B + \lambda I = LL^T$. Solve 
                \begin{equation}\label{eq:calc_w}
                    Lw = s.
                \end{equation}
                \State Compute the Newton correction 
                
                \begin{equation}\label{eq:dellamN}
                    \Delta \lambda^N \defeq \frac{\lambda \big( \vert \vert s \vert \vert - \frac{\lambda}{\sigma}  \big)}{\vert \vert s \vert \vert + \frac{\lambda}{\sigma} \big( \frac{\lambda \vert \vert w \vert \vert^2}{\vert \vert s \vert \vert^2} \big)}
                \end{equation}
                
                \State Let $\lambda \gets \lambda + \Delta \lambda^N$.
        \EndWhile
        
        \State $s^* \gets s(\lambda)$
    \EndIf
\end{algorithmic}
\end{algorithm}

At first glance, Algorithm~\ref{algo:subproblem} may not look feasible, as equation~\ref{eq:calc_w} requires the Cholesky matrix $L$, which is only cheaply obtained for small dense systems. 
Looking closer, we note that to execute Algorithm~\ref{algo:subproblem} one need not form $s$ and $w$, as only their corresponding norms are needed to compute $\Delta \lambda^N.$
Relevantly, it has been demonstrated that matrices in the quasi-Newton family have compact
matrix representations for the form
\begin{equation}
B=\gamma I + \Psi M^{-1} \Psi^T,  \label{eq:12}
\end{equation}
further detailed in \citet{SQN}. For example, for LSR1 $\Psi = Y - \gamma S$ and $M=(E - \gamma S^T S)$
where $E$ is a symmetric approximation of the matrix $S^T Y,$ whose lower triangular elements
equal those of $S^T Y$~\cite{erway2015efficiently}.  
They further show that for matrices of this class, an $\mathcal{O}(mn)$ calculation
may be used to implicitly form the spectral decomposition 
\[
B=U \Lambda U^T,
\] 
where $U$ is never formed but stored implicitly and $\Lambda$ satisfies
\begin{equation}\label{eq:qndiag}
\Lambda = \begin{pmatrix}
\gamma I & 0 \\
0 &   \hat \Lambda 
\end{pmatrix}
\end{equation}
where $\hat \Lambda \in \mathbb{R}^{k \times k}$ is the diagonal matrix defined in \citet{erway2015efficiently}. 
Thus we know that $B$ will have a cluster of eigenvalues equal to $\gamma$ of size $n-k.$ We can 
exploit this property to further reduce the computational complexity of Algorithm~\ref{algo:subproblem}.
Using this decomposition we see that $\Delta \lambda^N$ in Equation~\ref{eq:dellamN} can be computed  
as long as $\|s\|$ and $\|w\|$ are known.  Using the eigen decomposition of $B_k$ we get
\begin{align*}
\|s\|^2 &= g^T U (\Lambda +\lambda I)^{-2} U^T g \\
\|w\|^2 &= s^T L^{-T} L^{-1} s = s^T (B+\lambda I)^{-1} s = g^T U (\Lambda +\lambda I)^{-3} U^T g.  
\end{align*}
Note here that $\lambda$ denotes the parameter optimized in Algorithm~\ref{algo:subproblem} and
\emph{not} the diagonal values of $\Lambda.$ If we then define $U$ block-wise we can define the components $\hat g_1$ and $\hat g_2$
 as follows:
\begin{equation}
U \defeq \begin{pmatrix} U_1 & U_2 \end{pmatrix} \Rightarrow U^T g = \begin{pmatrix} U_1^T g \\ U_2^T g \end{pmatrix} 
= \begin{pmatrix} \hat g_1 \\ \hat g_2 \end{pmatrix}. \label{eq:Ugdef}
\end{equation}
Thus we can compute $\|s\|$ and $\|w\|$ in $\mathcal{O}(m)$ operations assuming $\hat g$ is stored, giving 
\begin{align}
\|s\|^2 &= \dfrac{\|\hat g_1\|^2}{(\lambda + \gamma)^2} + \sum_{i=1}^m \dfrac{\hat g_2(i)^2}{(\hat \lambda_{i} + \lambda)^2}\label{eq:snorm} \\
\|w\|^2 &= \dfrac{\|\hat g_1\|^2}{(\lambda + \gamma)^3} + \sum_{i=1}^m \dfrac{\hat g_2(i)^2}{(\hat \lambda_{i} + \lambda)^3}  \label{eq:wnorm}
\end{align}
Using this, the computation cost of Newton's method is reduced to an arguably to an inconsequential amount,
assuming that $\|\hat g_1\|$ and $\hat g_2$ required by Equations~\ref{eq:snorm} and~\ref{eq:wnorm}
can be efficiently computed.  We dub this optimization the ``norm-trick". We additionally
note that $Y$ and $S$ both change by only one column each iteration
of Algorithm~\ref{algo:full} (defined later).  Thus with negligible overhead we can
cheaply update the matrix $\Psi^T \Psi \in \mathbb{R}^{m \times m}$ each iteration by
retaining previously computed values that are not dependent on the new $(s,y)$ pair.  
Making the following two assumptions we
can next show that Algorithm~\ref{algo:subproblem} can be solved with
negligible overhead compared to classical LQN approaches.  

More specifically
we note that classic
LQN methods at each iteration must update $B_k$ and then solve a system of the form $s=-(B_k+\lambda I)^{-1}g$ for some $\lambda \ge 0.$
This creates an $\mathcal{O}(mn)$ computational lower bound that we aim to likewise achieve
when generating an optimal step for Equation~\ref{eq:cr_subproblem}.
In contrast to the approach described in~\citet{ARCsLSR1} that uses a QR factorization
of $\Psi$, we use an analogous approach to that described in~\citet{burdakov2017efficiently} for
trust-region methods to perform the majority of the required calculations
on matrices of size $m \times m$ in place of $n \times m.$  This saves 
significantly on both storage computational overhead.  
Note that unlike~\citet{burdakov2017efficiently} we do not explicitly compute $M^{-1}$
as we have found this matrix can become ill-conditioned periodically. We explain
how we are able to do this constructively in the following theorems.

\begin{assumption}\label{assume:PsiPsi}
The matrix $T=\Psi^T \Psi$ is stored and updated incrementally.  That is,
if $\Psi$ has one column replaced, then only one row and column of $T$ is updated.
\end{assumption}
\begin{assumption}\label{assum:Psig}
The vector $\bar u=\Psi^T g$ is computed once each iteration of Algorithm~\ref{algo:full}
and stored.
\end{assumption}
The following theorem shows that given $T$ and $\bar u$ defined in Assumptions~\ref{assume:PsiPsi}
and~\ref{assum:Psig}, that the optimal $\lambda^\ast$ can be cheaply obtained by Algorithm~\ref{algo:subproblem}
using $\mathcal{O}(m^3)$ operations.
\begin{theorem} \label{thm:solving}
Suppose that $B=\gamma I + \Psi M^{-1} \Psi^T$ as defined in Equation~\ref{eq:12}
and that $(V,\Lambda)$ 
solves the generalized eigenvalue problem $Mv = \lambda T v.$
Then $U_2$ as defined in \ref{eq:Ugdef} is given by
$U_2 = \Psi V.$  Further the corresponding eigenvalues $\hat \Lambda$ from Equation~\ref{eq:qndiag}
are given by $\hat \Lambda= (\gamma I + \Lambda^{-1}).$ We can then recover
$\hat g_2 = V^T \bar u,$
and $\|\hat g_2\| = g^T g - \hat g_2^T \hat g_2.$
\end{theorem}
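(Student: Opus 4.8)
The plan is to characterize the eigenpairs of $B$ that differ from the trivial cluster at $\gamma$, show they are forced to live in the column space of $\Psi$, and then convert the ordinary eigenvalue problem for $B$ into the generalized eigenvalue problem $Mv=\lambda Tv$. First I would read off from Equation~\ref{eq:12} that $B-\gamma I=\Psi M^{-1}\Psi^T$, so every vector orthogonal to the columns of $\Psi$ is an eigenvector of $B$ with eigenvalue $\gamma$. Since $M$ is invertible and (assuming $\Psi$ has full column rank) $\Psi$ is injective, one checks that $\ker(B-\gamma I)=\mathrm{range}(\Psi)^\perp$ exactly; hence $U_1$ from Equation~\ref{eq:Ugdef} spans $\mathrm{range}(\Psi)^\perp$, and every eigenvector with eigenvalue $\hat\lambda\neq\gamma$ lies in $\mathrm{range}(\Psi)$ and can be written $u=\Psi v$. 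These are precisely the $k$ columns collected in $U_2$, and because $U_2$'s columns lie in $\mathrm{range}(\Psi)$ while $U_1$'s lie in its orthogonal complement, $U_1^TU_2=0$ holds automatically.

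Next I would substitute $u=\Psi v$ into $Bu=\hat\lambda u$ and use $T=\Psi^T\Psi$ from Assumption~\ref{assume:PsiPsi} to obtain
\[
\gamma\,\Psi v+\Psi M^{-1}Tv=\hat\lambda\,\Psi v,
\]
so that $\Psi\big(M^{-1}Tv-(\hat\lambda-\gamma)v\big)=0$. Injectivity of $\Psi$ forces the bracketed vector to vanish, giving $M^{-1}Tv=(\hat\lambda-\gamma)v$, or equivalently $Mv=\tfrac{1}{\hat\lambda-\gamma}Tv$. Matching this against $Mv=\lambda Tv$ identifies $\lambda=1/(\hat\lambda-\gamma)$, hence $\hat\lambda=\gamma+\lambda^{-1}$; in matrix form this is exactly $\hat\Lambda=\gamma I+\Lambda^{-1}$ from Equation~\ref{eq:qndiag}, and the eigenvector identity $U_2=\Psi V$. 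Note that no generalized eigenvalue can be zero (that would require $v\in\ker M=\{0\}$), so each $\hat\lambda_i\neq\gamma$ and $k=m$, consistent with the claim.

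To confirm $U_2=\Psi V$ really delivers orthonormal eigenvectors I would invoke the standard $T$-orthonormalization of generalized eigenvectors: since $T$ is symmetric positive definite, $V$ may be scaled so that $V^TTV=I$, which is precisely $U_2^TU_2=V^T\Psi^T\Psi V=I$. Together with $U_1^TU_2=0$ this makes $U=(U_1\ U_2)$ orthogonal. The recovery of $\hat g_2$ is then immediate from Equation~\ref{eq:Ugdef}: $\hat g_2=U_2^Tg=V^T\Psi^Tg=V^T\bar u$, using $\bar u=\Psi^Tg$ from Assumption~\ref{assum:Psig}. Finally, orthogonality of $U$ yields $\|g\|^2=\|U^Tg\|^2=\|\hat g_1\|^2+\|\hat g_2\|^2$, so $\|\hat g_1\|^2=g^Tg-\hat g_2^T\hat g_2$; I read the last displayed identity of the theorem as this statement (with $\|\hat g_1\|$, not $\|\hat g_2\|$, on the left and squared).

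The main obstacle I anticipate is not the algebra but the rank and normalization bookkeeping: cancelling $\Psi$ is only legitimate when $\Psi$ has full column rank, and one must verify that the $m$ generalized eigenpairs account for all eigenvalues of $B$ away from $\gamma$ (handled by the $k=m$ argument above) and that the $T$-inner-product normalization is what simultaneously certifies orthonormality of $U_2$ and legitimizes $\hat g_2=V^T\bar u$. A secondary point worth stating explicitly is that the entire derivation uses only $M$ and $T$ and never forms $M^{-1}$, which is consistent with the paper's stated goal of avoiding inversions of the potentially ill-conditioned matrix $M$.
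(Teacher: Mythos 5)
Your proposal is correct and takes essentially the same route as the paper: both hinge on the generalized eigenvalue problem $Mv=\lambda Tv$ with the normalization $V^T\Psi^T\Psi V=I$, yielding $U_2=\Psi V$ and $\hat\Lambda=\gamma I+\Lambda^{-1}$ (the paper verifies $BU_2=U_2(\gamma I+\Lambda^{-1})$ directly, while you derive the same relation by substituting $u=\Psi v$ into $Bu=\hat\lambda u$; the algebra is identical). Your added bookkeeping on full column rank of $\Psi$, the characterization of the $\gamma$-eigenspace, and your reading of the final identity as $\|\hat g_1\|^2=g^Tg-\hat g_2^T\hat g_2$ are all consistent with (and slightly more careful than) the paper's argument.
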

\begin{proof}
Rather than inverting $M$
we can simply solve the generalized eigenvalue problem 
$[V,\Lambda]={\rm eig}(M, \Psi^T \Psi),$
where
\begin{eqnarray*}
MV &=& \Psi^T \Psi V \Lambda \\
V^T M V &=& \Lambda \\
V^T \Psi^T \Psi V &=& I,
\end{eqnarray*} 
where $\Lambda$ is the diagonal matrix of
generalized eigenvalues for the system $Mv=\lambda \Psi^T \Psi v.$
Then we have $U_2 = \Psi V$ implying
\[
B U_2 = \gamma  U_2 + \Psi M^{-1} \Psi^T (\Psi V \Lambda \Lambda^{-1})=
\gamma U_2 + \Psi M^{-1} M V \Lambda^{-1} =  U_2 (\gamma I + \Lambda^{-1}).
\]
Thus we can can set $\hat \Lambda$ from equation~\ref{eq:qndiag}
as $\hat \Lambda= (\gamma I + \Lambda^{-1}).$ 
Further we can recover
$\hat g_2 = U_2^T g =V^T \Psi^T g = V^T \bar u$
and $\|\hat g_2\| = g^T g - \hat g_2^T \hat g_2.$
\end{proof} 

Using the previous theorems and Equations~\ref{eq:snorm} and~\ref{eq:wnorm}
we can thus obtain $\lambda^\ast=\sigma \|s^\ast\|$ from Algorithm~\ref{algo:subproblem}
in $\mathcal{O}(m^3)$ additional flops once $T$ and $\bar u$ are formed.
We now show how to efficiently recover the optimal $s^\ast$ 
from Equation~\ref{eq:cr_subproblem} using $\mathcal{O}(mn)$ flops.
\begin{theorem}
Using the same assumptions and definitions as in Theorem~\ref{thm:solving},
given any $\lambda > {\rm max}(0,-\lambda_1),$ the solution $s=-(B+\lambda I)^{-1} g$ is given
by
\[
-\dfrac{1}{\lambda +\gamma}g  -\Psi V r,
\]
where $r$ can be formed with $\mathcal{O}(m^2)$ computations. 
\end{theorem}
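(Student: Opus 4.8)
The plan is to start from the implicit spectral decomposition $B = U\Lambda U^T$ furnished by the discussion preceding Theorem~\ref{thm:solving}, partition $U = \begin{pmatrix} U_1 & U_2 \end{pmatrix}$ as in Equation~\ref{eq:Ugdef}, and resolve $(B+\lambda I)^{-1}$ into its action on the two eigenspaces. Because the $n-k$ columns of $U_1$ all correspond to the single eigenvalue $\gamma$, the inverse splits as $(B+\lambda I)^{-1} = \frac{1}{\gamma+\lambda}U_1 U_1^T + U_2(\hat\Lambda + \lambda I)^{-1}U_2^T$. The entire difficulty is that $U_1$ is an $n \times (n-k)$ matrix we can never afford to form, so the crux is to rewrite everything in terms of $U_2 = \Psi V$ alone.

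First I would eliminate $U_1$ using orthonormality of $U$: since $U_1 U_1^T + U_2 U_2^T = I$, I substitute $U_1 U_1^T = I - U_2 U_2^T$ to obtain
\[
(B+\lambda I)^{-1} = \frac{1}{\gamma+\lambda}I + U_2\left[(\hat\Lambda + \lambda I)^{-1} - \tfrac{1}{\gamma+\lambda}I\right]U_2^T.
\]
Multiplying by $-g$ yields $s = -\frac{1}{\gamma+\lambda}g - U_2\left[(\hat\Lambda+\lambda I)^{-1} - \frac{1}{\gamma+\lambda}I\right]U_2^T g$, which already has the advertised shape once I invoke Theorem~\ref{thm:solving} to replace the unformed quantities by $U_2 = \Psi V$ and $U_2^T g = \hat g_2 = V^T\bar u$.

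Next I would read off the vector $r$. Setting $r \defeq \left[(\hat\Lambda + \lambda I)^{-1} - \frac{1}{\gamma+\lambda}I\right]V^T\bar u$ gives exactly $s = -\frac{1}{\gamma+\lambda}g - \Psi V r$. For the flop count, $\hat\Lambda = \gamma I + \Lambda^{-1}$ is diagonal by Theorem~\ref{thm:solving}, so the bracketed matrix is diagonal and its entries cost $\mathcal{O}(m)$; the only genuinely quadratic work is the product $V^T\bar u$, where $V$ is $m\times m$ and $\bar u = \Psi^T g$ is already stored by Assumption~\ref{assum:Psig}. Hence $r$ is formed in $\mathcal{O}(m^2)$ operations, and the lone remaining $n$-dependent step, forming $\Psi(Vr)$, costs $\mathcal{O}(mn)$, matching the classical LQN recovery cost.

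The main obstacle to watch is the well-definedness of the reduction: the substitution $U_1 U_1^T = I - U_2 U_2^T$ relies on $U$ being genuinely orthonormal, which hinges on the normalization $V^T \Psi^T \Psi V = I$ established in Theorem~\ref{thm:solving}; I would verify that the generalized eigenvectors are scaled so that indeed $U_2^T U_2 = I$. A secondary check is that the hypothesis $\lambda > \max(0,-\lambda_1)$ renders $B+\lambda I$ positive definite, keeping $\gamma + \lambda$ and every $\hat\lambda_i + \lambda$ strictly positive so that all diagonal inverses appearing in $r$ exist.
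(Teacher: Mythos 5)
Your proposal is correct and follows essentially the same route as the paper: both split $(B+\lambda I)^{-1}$ along the $U_1$/$U_2$ eigenspaces, eliminate $U_1U_1^T$ via $I - U_2U_2^T$, and identify $r$ as the diagonal matrix $\bigl[(\hat\Lambda+\lambda I)^{-1}-\tfrac{1}{\gamma+\lambda}I\bigr]$ applied to $V^T\bar u$. If anything, your write-up is slightly more complete than the paper's, since you explicitly name $r$, justify the $\mathcal{O}(m^2)$ count, and flag the orthonormality condition $V^T\Psi^T\Psi V = I$ on which the reduction depends.
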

\begin{proof}
 Note we can further
save on computation by storing $\Psi^T g$ for later calculations when
we recover the final search direction.  Note that the very end we must form the
search direction by solving the system $(B + \lambda I)s =-g $
for the optimal value of $\lambda.$  This implies
\begin{eqnarray*}
s&=&-U (\Lambda+\lambda I)^{-1}U^T g \\
&=& -\dfrac{1}{\lambda +\gamma} U_1 U_1^T g - U_2 (\hat \Lambda + \lambda I)^{-1} U_2^T g \\
&=&-\dfrac{1}{\lambda +\gamma} \left( U_1 U_1^T g + (U_2 U_2^T g - U_2 U_2^Tg)\right) -U_2 (\hat \Lambda + \lambda I )^{-1} U_2^T g \\
&=& -\dfrac{1}{\lambda +\gamma}g  -U_2 
[(\hat \Lambda + \lambda I)^{-1} -\dfrac{1}{\lambda +\gamma} I]
U_2^T g \\
&=& -\dfrac{1}{\lambda +\gamma}g  -\Psi V \bar E V^T \Psi^T g,
\end{eqnarray*}
where $E = (\hat \Lambda + \lambda I)^{-1} - (\lambda + \gamma)^{-1} I) .$ 
\end{proof}
\begin{theorem}
Let $(\lambda_1, u_1)$ denote the eigenpair corresponding to the most negative
eigenvalue of the matrix $B.$ Then, if $\gamma < \min(\text{{\normalfont diag}}(\hat \Lambda)),$ $u_1$
can be formed as $u_1 = \hat r/\|\hat r\|$ where $\hat r = (I - U_2 U_2^T) r$ for any vector $r$ in $\mathbb{R}^n$ such
that $\|\hat r\| > 0.$  Otherwise $u_1=\Psi v_k$ where $v_k$ is a column of $V$
that corresponds to the smallest eigenvalue of $\hat \Lambda.$
\end{theorem}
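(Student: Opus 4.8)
The plan is to read off the pair $(\lambda_1, u_1)$ directly from the implicit spectral decomposition $B = U\Lambda U^T$ established in Theorem~\ref{thm:solving}, splitting into two cases according to whether the most negative eigenvalue of $B$ is the clustered value $\gamma$ or the smallest diagonal entry of $\hat\Lambda$. Recall from Equation~\ref{eq:qndiag} that the spectrum of $B$ consists of the clustered value $\gamma$, whose eigenspace is the column space of $U_1$, together with the diagonal entries of $\hat\Lambda$, whose eigenvectors are the columns of $U_2 = \Psi V$. Hence the sign of $\gamma - \min(\operatorname{diag}(\hat\Lambda))$ alone determines which block contains $\lambda_1$, which is exactly the dichotomy in the statement.

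First I would treat the case $\gamma < \min(\operatorname{diag}(\hat\Lambda))$, so that $\lambda_1 = \gamma$ and any unit vector in the column space of $U_1$ is an admissible $u_1$. Since $U = \begin{pmatrix} U_1 & U_2 \end{pmatrix}$ is orthogonal we have $U_1 U_1^T + U_2 U_2^T = I$, so $I - U_2 U_2^T = U_1 U_1^T$ is precisely the orthogonal projector onto $\operatorname{range}(U_1)$. Thus for any $r$ the vector $\hat r = (I - U_2 U_2^T) r$ lies in the $\gamma$-eigenspace, and writing $B = \gamma U_1 U_1^T + U_2 \hat\Lambda U_2^T$ together with $U_2^T \hat r = 0$ gives $B \hat r = \gamma \hat r$ by a one-line computation. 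Whenever $\|\hat r\| > 0$, normalizing yields the claimed $u_1 = \hat r / \|\hat r\|$. A practical point worth recording is that $U_2 U_2^T = \Psi V V^T \Psi^T$, so this projection never requires forming $U_1$ and costs only $\mathcal{O}(mn)$ flops.

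For the complementary case $\gamma \ge \min(\operatorname{diag}(\hat\Lambda))$, the most negative eigenvalue is the smallest diagonal entry of $\hat\Lambda$, and its eigenvector is the corresponding column of $U_2 = \Psi V$; if $v_k$ is the column of $V$ associated with that entry, then $u_1 = \Psi v_k$. The only remaining check is that this vector is already of unit length: from the proof of Theorem~\ref{thm:solving} we have $V^T \Psi^T \Psi V = I$, so $\|\Psi v_k\|^2 = v_k^T \Psi^T \Psi v_k = 1$ and no rescaling is needed.

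I do not anticipate a genuine obstacle, as the argument is essentially bookkeeping layered on top of Theorem~\ref{thm:solving}. The one step requiring care is justifying that $I - U_2 U_2^T$ is exactly the orthogonal projector onto the $\gamma$-eigenspace, which rests on the orthonormality $U_2^T U_2 = I$ (equivalently $V^T \Psi^T \Psi V = I$) established earlier; this single fact simultaneously guarantees that $\hat r$ lands in the correct eigenspace in the first case and that $\Psi v_k$ is automatically a unit eigenvector in the second.
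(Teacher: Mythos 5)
Your proposal is correct and follows essentially the same route as the paper: identify $I - U_2U_2^T$ as the orthogonal projector onto $\mathrm{span}(U_1)$ (the $\gamma$-eigenspace), compute $B\hat r = \gamma\hat r$, and in the other case take the column of $U_2 = \Psi V$ corresponding to the smallest entry of $\hat\Lambda$. Your added checks --- that $\|\Psi v_k\| = 1$ via $V^T\Psi^T\Psi V = I$, and that the projector can be applied as $\Psi V V^T \Psi^T$ in $\mathcal{O}(mn)$ flops --- are correct refinements the paper leaves implicit.
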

\begin{proof}

Note that $(I-U_2 U_2^T)$ is the projection matrix onto the subspace defined by the ${\rm span}(U_1)$
implying $U_2 \hat r = 0.$
\[
B \hat r = \gamma U_1 U_1^T\hat r = \gamma (I-U_2 U_2^T) \hat r = \gamma \hat r, 
\]
since $\hat r$ has already been projected.  Thus $\hat r$ is an eigenvector of $\gamma.$ 
If $\gamma$ is not the smallest eigenvalue of $B,$ then by design $u_1$ can be obtained from $U_2e_1$ assuming
the eigenvalues of $\hat \Lambda$ are sorted smallest to largest.
\end{proof}

Note that in practice we only need to form $u_1$ explicitly if $\lambda^\ast = \max(0, -\lambda_1).$
This can only be true if $-\lambda_1 < 0$ and $\|s(-\lambda_1)\| < -\lambda_1/\sigma_k.$  This condition
is again cheaply checked beforehand using Equation~\ref{eq:snorm}. Hence the primary computational cost of an iteration of Algorithm~\ref{algo:subproblem} is
the work needed to update a single row and column of $\Psi^T \Psi$ and $M$ as
well as form $\Psi^T g$ and $\Psi r$ once.  This is the same work required by classical quasi-Newton
methods as well.

Using the techniques developed and proven in this section, we can now form a very efficient solver for Equation~\ref{eq:cr_subproblem} (using a modified version of Algorithm~\ref{algo:subproblem}); using the norm-trick, we can avoid explicitly forming $s$ and $w$, reducing complexity of a Newton iterate from $\mathcal{O}(mn)$ to $\mathcal{O}(m)$.
\subsection{Solving the nonlinear optimization problem}
In this section we focus on solving the problem
\begin{equation}\label{eqn-min}
\underset{x\in {\mathbb{R}}^n}{\textstyle{\min}} f(x) = \textstyle{\sum}_{i=1}^N f_i(x),
\end{equation}
where $f_i(x)$ is defined as the loss for the $i$-th datapoint, using the subproblem solver defined in Section~\ref{sec:subsolver}. 
We follow the framework described in \citet{ARCP1}. 
A benefit of the algorithm defined in Algorithm~\ref{algo:simple_ARC} is that first-order convergence is proven
if $B_k$ remains bounded and $f(x) \in \mathcal{C}^1(\mathbb{R}^n)$.
Thus the condition that $B_k=\nabla^2 f(x)$ is greatly relaxed from its
predecessors such as~\citet{nesterov2006cubic}.
\begin{algorithm}
    \caption{Adaptive Regularization using Cubics (ARC). \textcolor{blue}{Blue text} indicates our modification to default to an SGD-like step on failure.}\label{algo:simple_ARC}
    

    Given $x_0, \sigma_0 > 0, \gamma_2, \gamma_1, \eta_2 > \eta_1 > 0, \alpha > 0$, for $k = 0, 1, \dots, $ until convergence,
    
    \begin{enumerate}
        \item 
            Compute update $s^*_k$ such that:
            \begin{equation}
                m_k(s^*_k) \le m_k(s^c_k)\label{eq:cauchy_update}
            \end{equation}
            where the Cauchy point $s^c_k = -\upsilon^c_k g_k$ and $\upsilon^c_k = \underset{\upsilon \in \mathbb{R_+}}{\argmin\,\,} m_k( -\upsilon g_k).$
            
        \item Compute ratio between the estimated reduction and actual reduction
        \begin{equation}
            \rho_k \gets \frac{f(x_k) - f(x_k + s^*_k)}{f(x_k) - m_k(s^*_k)}\label{eq:rho_calc}
        \end{equation}

        \item Update
        \begin{equation}
            x_{k + 1} \gets \begin{cases} 
            x_k + s^*_k & \text{if } \rho_k \ge \eta_1 \\
            x_k  \textcolor{blue}{- \alpha g_k} & \text{otherwise}
            \end{cases}\label{eq:update_acceptance}
        \end{equation}
        
        \item Set
        \begin{equation}
            \sigma_{k + 1} \text{ in } \begin{cases} 
            (0, \sigma_k] & \text{if } \rho_k > \eta_2 \\
            [\sigma_k, \gamma_1 \sigma_k] & \text{if } \eta_2 \ge \rho_k \ge \eta_1 \\
            [\gamma_1 \sigma_k, \gamma_2 \sigma_k] & \text{otherwise}
            \end{cases}\label{eq:rho_update}
        \end{equation}
    \end{enumerate}
\end{algorithm} 

In Algorithm~\ref{algo:simple_ARC}, we first solve the CR subproblem (Equations~\ref{eq:cr_subproblem},\ref{eq:cauchy_update}; Algorithm~\ref{algo:subproblem}) to find our step, $s^*_k$. We then determine if the step is accepted by examining if the ratio between the decrease in the objective ($f(x_k) - f(x_k + s^*_k)$) and the predicted decrease in objective ($f(x_k) - m_k(s^*_k)$) is large enough (Equations~\ref{eq:rho_calc}-\ref{eq:update_acceptance}), determined by the hyperparameter $\eta_1$. Then, depending on the change of performance and choice of $\eta_2$, we adjust our regularization parameter $\sigma_k$: the `better' the step, the more we decrease $\sigma_{k+1}$, the worse, the more we increase it (Equation~\ref{eq:rho_update}). The amount of increase and decrease is governed by two hyperparameters, $\gamma_1$ and $\gamma_2$.

We note one important modification to the ARC framework: if we find that $\rho < \eta_1$, we take an SGD step instead of just setting $x_k \gets x_{k - 1}$ (Equation~\ref{eq:update_acceptance}). While, empirically, rejected steps are not common, we find that reverting to SGD in case of failure can save time in cases where $B_k$ is ill-conditioned. One may note that we have no guarantees that $f(x_k) - f(x_k - \alpha g_k) > 0$, which may seem to contradict the ARC pattern detailed in \citet{ARCP1} which only accepts steps which improve loss. However \citet{Chen2018} proves that in a trust-region framework, if you accept all steps, $\rho_k$ only need be positive half of the time for almost sure convergence (\citet{LinesearchConvergance} proves a similar result for first-order methods); it has also been shown that noisy SGD steps improve performance of final solution quality \cite{RethinkingSGD, SGDRegularization}. Implementation details regarding this choice can be found in Section~\ref{section:implementation_details}.

Putting this together, we can form the full ARCLQN algorithm, detailed in Algorithm~\ref{algo:full}.

%

\algrenewcommand\algorithmicensure{\textbf{Output:}}
\begin{algorithm}
\caption{ARCLQN, our proposed algorithm for solving \ref{algo:simple_ARC} under memory constraints.}\label{algo:full}
\begin{algorithmic}[1]
\Require{Given $x_0$ : initial parameter vector}
\Require{$0 < \eta_1 \le \eta_2$ : hyperparameters to measure the level of success of a step}
\Require{$\mathcal{D}$, $k$ : dataset and minibatch-size, respectively.}
\Require{$\sigma_0$ : starting regularization parameter}
\Require{$\epsilon, \delta$ : tolerance parameters }
\Require{$f(x, b)$ : objective function with inputs parameters $x$ and minibatch $b$}
\Require{$\alpha_1, \alpha_2$ : learning rates}
\State Initialize $B_0 = I$.
\For{$k = 1, 2, \dots$}
    \State Let $b_k$ be a minibatch sampled randomly from $\mathcal{D}$ of size $k$
     \State $g_k \gets \nabla_x f(x_{k-1}, b_k)$ 
    \State Calculate $\lambda_1$ of $B_{k-1}$ 
    \State Let $\lambda \gets \max(-\lambda_1, 0) + \epsilon$
    \State Compute $s^*_k$ (using Algorithm~\ref{algo:subproblem})
    \State Calculate $\rho$ (as in Equation~\ref{eq:rho_calc})
    \If{$\rho \ge \eta_1$}\label{algoline:acceptance}
        \State $x_k \gets x_{k - 1} + \alpha_1 s^*_k$
        \State $y \gets \nabla_x f(x_k, b_k) - g_k$
        \State Update $B_k$ using $B_{k - 1}, \alpha_1 s, y$ if update and resulting $B_k$ are well-defined\label{algoline:bkupdate1}
        \If{$\rho \ge \eta_2$}
            \State $\sigma_k \gets \max(\frac{\sigma_{k - 1}}{2}, \delta)$
        \EndIf 
    \Else
        \State $\sigma_k \gets 2\cdot\sigma_{k-1}$
        \State $x_k \gets x_{k - 1} - \alpha_2 g_k$
        \State $y \gets \nabla_x f(x_k, b_k) - g_k$
        \State Update $B_k$ using $B_{k - 1}, -\alpha_2 g_k, y$ if update and resulting $B_k$ are well-defined\label{algoline:bkupdate2}
    \EndIf 
\EndFor
\end{algorithmic}
\end{algorithm}

\section{Numerical Results}
\subsection{Comparison to SR1}

We start by benchmarking the optimized CR subproblem solver alone, without integration into the larger ARCLQN optimizer. These results are summarized in Table~\ref{table:1}. All timing information is reported as the average across 10 runs. We can see that the dense SR1 solver fails to scale to more than 10,000 variables. We also see that the traditional LSR1 solver becomes 
computationally prohibitive for higher dimensions.
For example, when $n=10^{8},$ the positive-definite test case 
takes 274 seconds to converge, whereas following steps outline in Section~\ref{sec:subsolver} it is reduced to 2.33 seconds, a speedup of over 100x. Considering the CR subproblem represents the bulk of the computation of any given optimization step, this performance 
improvement greatly increases the scalability of the algorithm.
In the next section, we use the enhancements highlighted
here to provide preliminary results using Algorithm~\ref{algo:full}.

\begin{table}[!htbp]
\centering
\small \setlength{\tabcolsep}{5pt}
\begin{tabular}{l *{9}{l}}
\toprule
 & & \multicolumn{6}{c}{Time (in seconds) to solve CR subproblem of given dimension} \\
 \cmidrule(r){3-9}
 Method & Case & $1\text{e}2$ & 1e3 & 1e4 & 1e5 & 1e6 & 1e7 & 1e8\\
 \midrule
 SR1 & Hard & $3.71\text{e-}3$ & $2.79\text{e-}1$ &  $1.03\text{e}2$ &  - &  - & - & - \\
 LSR1 & Hard & $4.47\text{e-}4$ &  $8.05\text{e-}4$ &  $3.16\text{e-}3$ & $5.72\text{e-}3$  & $4.05\text{e-}2$ & $8.75\text{e-}1$ & $7.41\text{e}0$\\
 ARCLQN & Hard & $4.31\text{e-}4$ & $7.34\text{e-}4$ & $1.68\text{e-}3$ & $4.47\text{e-}3$ & $2.66\text{e-}2$ & $6.44\text{e-}1$ & $5.64\text{e}0$\\
    \addlinespace 
 SR1 & Indefinite & $1.93\text{e-}3$ &  $8.53\text{e-}2$ &  $1.17\text{e}1$ &  - & - & - & -\\
 LSR1 & Indefinite & $1.49\text{e-}3$ & $3.54\text{e-}3$ &  $1.90\text{e-}2$ &  $4.40\text{e-}2$ &  $7.78\text{e-}1$ & $8.55\text{e}0$ & $8.14\text{e}1$ \\
 ARCLQN & Indefinite & $8.02\text{e-}4$ &  $1.69\text{e-}3$ &  $1.99\text{e-}3$ &  $2.79\text{e-}3$ & $1.90\text{e-}2$ & $2.59\text{e-}1$ & $2.39\text{e}0$\\
    \addlinespace 
 SR1 & Positive Definite & $1.39\text{e-}3$ &  $9.65\text{e-}1$ &  $9.23\text{e}1$ &  - & - & - & -\\
 LSR1 & Positive Definite & $7.85\text{e-}3$ & $1.64\text{e-}2$ & $9.09\text{e-}2$ & $1.48\text{e-}1$ & $2.12\text{e}0$ & $3.06\text{e}1$ & $2.74\text{e}2$\\
 ARCLQN & Positive Definite & $3.64\text{e-}3$ & $6.03\text{e-}3$ & $6.45\text{e-}3$ & $7.90\text{e-}3$ & $2.68\text{e-}2$ & $2.70\text{e-}1$ & $2.33\text{e}0$\\
\bottomrule
\end{tabular} 
\newline
\caption{Timing information for solving the CR subproblem, Equation~\ref{eq:optimality}. A hyphen indicates that the test did not terminate within 300 seconds. SR1 corresponds to an optimized SR1 implementation. LSR1 corresponds to an optimized LSR1 implementation, without the norm-trick. ARCLQN corresponds to an optimized LSR1 implementation that uses the norm-trick. For limited memory experiments, $m = 3$ was used. Cases are detailed in section~\ref{section:ARC}. All other columns correspond to the problem dimension, and entries correspond to time (in seconds) required to find the global minimizer $s^*$ using CPU.}\label{table:1}
\end{table} 
\raggedbottom
\subsection{Autoencoding}

We also experiment using our ARCLQN solver as the optimizer of an autoencoder, as detailed in \citet{goodfellow2016deep}. As our dataset, we use CIFAR-10 \cite{CIFAR10}. We compare against a number of recent optimizers \cite{Adam, SGDoverview, APOLLO, AdaHessian}. For this experiment, as our Hessian approximation, we use an LSR1 matrix \cite{LSR1}. We performed a hyperparameter search over learning rates in $\alpha \in \{.001, .01, .1, .7, 1\}$ for all optimizers. For applicable optimizers, we also searched for the optimal $\epsilon \in \{1\text{e-}4, 1\text{e-}8, 1\text{e-}16\}$. For each optimizer, we chose the hyperparameter settings that lead to the lowest test loss after 10 epochs. For limited memory methods, we fixed the history size at $m = 5$. We use a convolutional neural network \cite{goodfellow2016deep}, with 3 convolutional layers, then 3 transposed convolutional layers. For all layers, padding and stride are set to 1 and 2 respectively. In-between all layers (except the middle one), SeLU \cite{SELU} activation are used; the final layer uses a sigmoid activation. Layers have $3, 12, 24, 48, 24, 12$ input channels respectively; minibatch size is fixed as 128.  As a loss function, we use binary cross entropy; for training, we take the mean across the batch, for evaluation, we take the sum across the entire dataset. For a more detailed look at the hyperparameters, please read Section~\ref{section:hyperparameters}. Results are summarized in Figure~\ref{fig:cifar_loss} and Table~\ref{table:cifar_speed}. All numbers reported are averaged across 10 runs. It is worth noting that while LBFGS converges more rapidly, it is over twice as slow as our approach, and suffers from numerical stability issues: of the 10 runs performed, 2 failed due to NaN loss. 

\begin{minipage}{\textwidth}
\begin{minipage}[!t]{0.55\textwidth}
\centering
\includegraphics[scale=0.55]{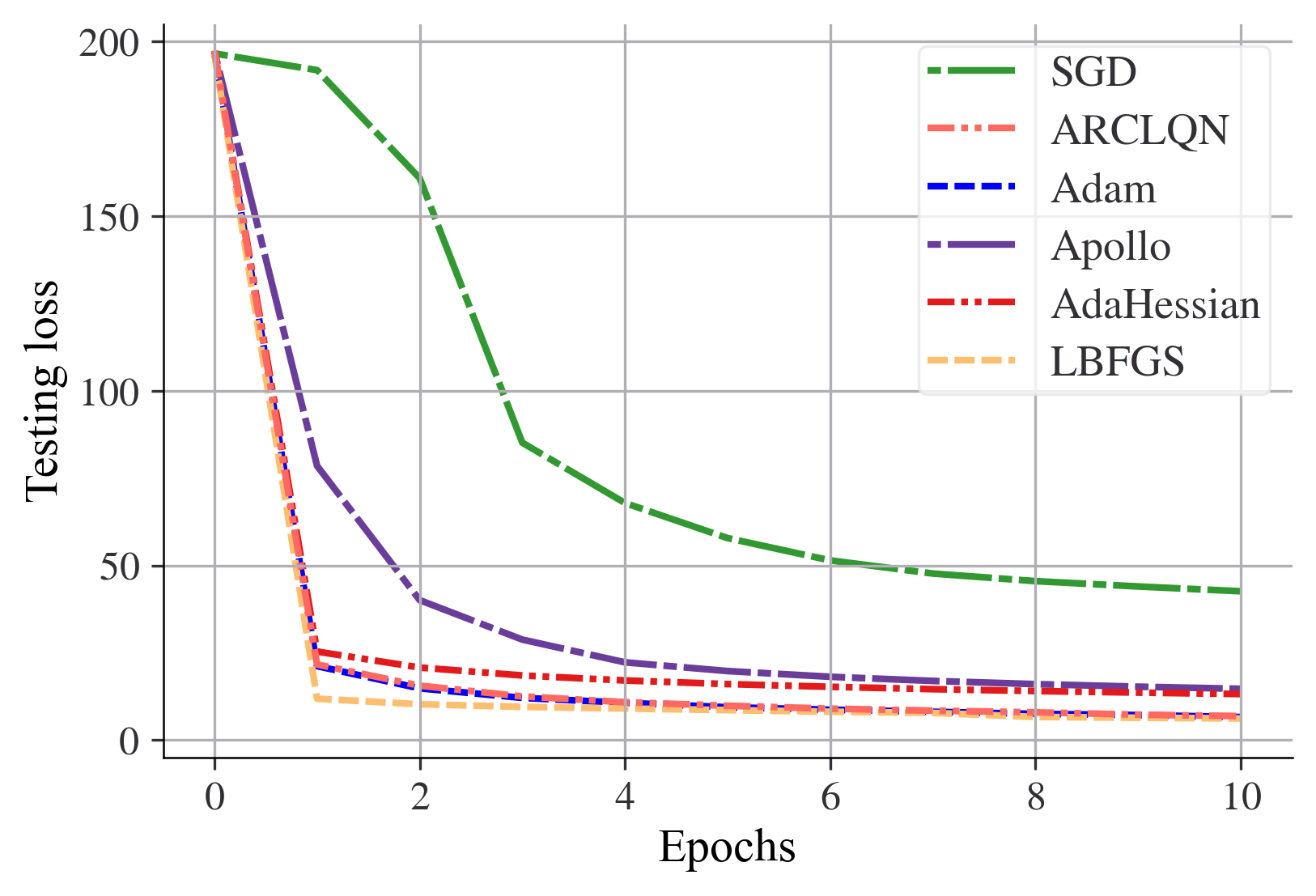}
\captionof{figure}{Testing loss of the trained CIFAR-10 autoencoder, evaluated at the end of each epoch.}\label{fig:cifar_loss}
\end{minipage}
\hfill
\begin{minipage}[!t]{0.44\textwidth}
\centering
\begin{tabular}{cc}\hline
    Optimizer & Cost ($\times$SGD)\\ \hline
    SGD        & 1.00 \\
    Adam       & 1.00 \\
    Apollo     & 1.00 \\
    AdaHessian & 1.01 \\
    ARCLQN    & 2.56 \\
    LBFGS      & 5.45 \\ \hline
  \end{tabular}
  \captionof{table}{The amount of time taken to train for 10 epochs (compared to SGD) for different optimizers. Timing information included all parts of training, but excluded calculation of test set loss.}\label{table:cifar_speed}
\end{minipage}
\end{minipage}

\section{Conclusion} 

We have introduced a new family of optimizers, collectively referred to as ARCLQN, which utilize a novel fast large-scale solver for the CR subproblem. We demonstrate very large speedups over a baseline implementation, and benchmarked ARCLQN, finding it competitive with Adam and LBFGS, and beating other strong optimizers. To the best of our knowledge, ARCLQN is the first extension of the ARC framework to the limited memory case without major modification of the core framework. We additionally expand upon ARC, explicitly incorporating first-order updates into our methodology. 

\FloatBarrier
\bibliography{refs}

\appendix

\section{Implementation Details}\label{section:implementation_details}

Second order methods can, at times, be unstable. To achieve good performance and stable training, it is important to use heuristics to prevent or alleviate this instability. For the sake of complete transparency, we share all used heuristics and modifications not explicitly detailed elsewhere in this section. It is worth explicitly noting that none of the parameters below have been tuned for performance, and instead have been chosen either completely arbitrarily, or via test-runs on toy problems. There may be significant room for improvement with actual tuning of these parameters, and we leave that to future work.

If the step we take multiple times times in a row are very similar to each-other, or if the steps are very small, we can run into issues with $S_k$ being singular or $B_k$ being ill conditioned. We use two heuristics to detect and fix this. First, before updating $B_k$ on lines~\ref{algoline:bkupdate1}~and~\ref{algoline:bkupdate2} of Algorithm~\ref{algo:full}, we set $y \gets \frac{y}{\max(\|s\|, \kappa)}$ and $s \gets \frac{s}{\max(\|s\|, \kappa)}$. This prevents $B_k$ from becoming ill-conditioned if $\| s \|$ is very small. Additionally, we also reset $B_k$ if the minimum eigenvalue of $S_k^TS_k$ is less than $\kappa$. When we `reset' $B_k$, we drop the first and last column of $S_k$ and $Y_k$ instead of setting $B_k \gets I$; this helps us prevent resetting from destroying too much curvature information. We set $\kappa = 1\text{e-}7$.

Another important detail is that we do not check if $\phi_1(\lambda) = 0$ is true when using Newton's method in Algorithm~\ref{algo:subproblem}. Instead, we repeat the while loop until $\|s\|-\frac{\lambda}{\sigma} < \nu$. For CIFAR-10 autoencoding, we set $\nu = 1\text{e-}5$. For comparison to SR1, we set $\nu = 1\text{e-}7$. We empirically find that, for larger scale problems, $\nu$ can be set higher, as $\phi_1(\lambda)$ does not change much at the final iterations of Algorithm~\ref{algo:subproblem}. We also modify the update criteria, detailed in Equation~\ref{eq:update_acceptance} and line~\ref{algoline:acceptance} in Algorithm~\ref{algo:full}: we check not only that $\rho \ge \eta_1$ but also that $f(x_k, b_k) - f(x_k + s^*_k, b_k)$ is greater than $\mu$. For CIFAR-10 experiments, we set $\mu = 1\text{e-}3$. This prevents issues of $\|s\| \to 0$ as $\sigma_k \to \infty$. Since we update $B_k$ regardless of which type of step we take, we empirically found that updating $B_k$ with $s, y$ pairs from SGD-like steps when $(x_k, b_k) - f(x_k + s^*_k, b_k)$ is very small leads to higher $\rho$ subsequently. For CIFAR-10 experiments, we do not take an actual SGD step, but instead, use an Adam step \cite{Adam}. Finally, we also bound $\sigma$ above by 8096, as rarely $\rho < \eta_1$ can occur multiple times in a row, which can lead to many very small steps being taken with little affect on performance.

\section{Hyperparameters}\label{section:hyperparameters}

\begin{center}
\begin{tabular}{cccc}\hline
    Optimizer & $\alpha$ & $\beta$ & $\epsilon$\\ \hline
    SGD        & $0.01$ &  $0.9$ & - \\
    Adam       & $0.001$ & $(0.9, 0.999)$ & $1\text{e-}8$ \\
    Apollo     & $0.01$ & $0.9$ & $1\text{e-}8$ \\
    AdaHessian & $0.1$ & $(0.9, 0.999)$ & $1\text{e-}4$ \\
    ARCLQN    & $(1, 0.001)$ & $(0.9, 0.999)$ & $1\text{e-}4$\\
    LBFGS      & $1$ & - & - \\ \hline
  \end{tabular}\captionof{table}{Hyperparameter settings for the optimizers used in the CIFAR-10 autoencoding experiments. A dash indicates that the optimizer does not have a given hyperparameter.}\label{table:hyperparams}
\end{center}

This section will detail optimizer settings not otherwise explicitly mentioned in the paper. The hyperparameters used to generate Figure~\ref{fig:cifar_loss} can be found in Table~\ref{table:hyperparams}. $\alpha_2 = 0.001$ for ARCLQN was set arbitrarily. For ARCLQN, we use $\eta_1 = 0.1, \eta_2 = 0.7$. These numbers were also set arbitrarily. For all optimizers with momentum, $\beta$ values were left at their defaults and not tuned. Since the Apollo paper emphasizes that warmup is extremely important for their method, we use linear warmup over 500 steps \cite{APOLLO}.

\end{document}